\newtheorem{theorem}{\textbf{Theorem}}
\newtheorem{corollary}{\textbf{Corollary}}
\newtheorem{definition}{\textbf{Definition}}
\newtheorem{example}{\textbf{Example}}
\newtheorem{lemma}{\textbf{Lemma}}
\newtheorem{proposition}{\textbf{Proposition}}
\newtheorem{remark}{\textbf{Remark}}
\begin{document}

\title[Convexity and some geometric properties]{Convexity and some geometric properties}

\author[J. X. Cruz Neto$^{1}$, \'Italo Melo$^{2}$ and Paulo Sousa$^{3}$]{J. X. Cruz Neto$^{1}$, \'Italo Melo$^{2}$ and Paulo Sousa$^{3}$}


\thanks{The first author is supported by CNPq/Brazil. The third author is 
supported by CNPq/Brazil and PROCAD/CAPES/Brazil.}


\begin{abstract}

The main goal of this paper is to present results of existence and non-existence of convex functions on Riemannian manifolds and, in the case of the existence, we associate such functions to the geo\-metry of the manifold. Precisely, we prove that the conservativity of the geodesic flow on a Riemannain mani\-fold with infinite volume is an obstruction to the existence of convex functions. Next, we present a geometric condition that ensures the existence of (strictly) convex functions on a particular class of complete non-compact manifolds, and, we use this fact to construct a manifold whose sectional curvature assumes any real value greater than a negative constant and admits a strictly convex function. In the last result we relate the geometry of a Riemannian manifold of positive sectional curvature with the set of minimum points of a convex function defined on the manifold. 

\medskip

\noindent\textit{$\bf{Key \ words:}$} Convex function, Geodesic flow, Conformal fields, Soul of a manifold.

\vspace{.2cm}


\medskip

\begin{center}
Universidade Federal do Piau\'{i}, Departamento de
Matem\'{a}tica, 64049-550, Ininga - Teresina - PI, Brazil\\
$^{1}$email: jxavier@ufpi.edu.br\\
$^{2}$email: italodowell@ufpi.edu.br\\
$^{3}$email: paulosousa@ufpi.edu.br
\end{center}

\end{abstract}

\maketitle
\section{Introduction}

The concept of convexity plays a very important role in optimization theory, firstly because many objective functions are convex in a sufficiently small neighborhood of a local minimum point, and secondly because one can establish the convergence of numerical methods to estimate minimum points for convex functions.

Convex functions occur abundantly, with many structural implications on Riemannian manifolds and form an important link between modern analysis and geometry. The existence of convex functions implies restrictions on the geometry or topology of a complete non-compact Riemannian manifold. For example: Bishop and O'Neill \cite{BO} proved that there is no non-trivial smooth convex function on a complete Riemannian manifold with finite volume; later, Yau \cite{Y} generalized the result of Bishop and O'Neill, proving that there is no non-trivial continuous convex function on a complete manifold with finite volu\-me; Shio\-hama \cite{S} proved a result relating the existence of strictly convex functions to the topology of the Riemannian manifold, actually he proved that if a complete Riemannian manifold admits a strictly convex function, then the manifold has at most two ends. 

In this paper we obtain a geometric-topological restriction to the existence of non-trivial convex functions on complete non-compact Riemannian manifolds, more specifically, we prove that the conservativity of the geodesic flow on a Riemannian manifold with infinite volume implies that all convex functions on the manifold are constant, this fact generalizes in a certain sense the result proved by Yau \cite{Y}. 


The technique used in the proof of convergence of many methods of minimizing convex functions on Riemannian manifolds are well-posed when the sectional curvature does not change sign. The works of Cruz Neto et al. \cite{CNLO} and Ferreira-Oliveira \cite{FO} are pioneers in the class of works involving methods of minimizing convex functions on Riemannian manifolds of non-negative sectional curvature. In 2002, Ferreira and Oliveira \cite{FO1} established proximal point method in Hadamard manifolds (non positive sectional curvature). 

A first attempt to escape the sign limiting of the sectional curvature, as far as we know, was made by Wang et al. \cite{WLY}. However, we did not find in the literature an example of a Riemannian manifold whose sectional curvature changes sign, endowed with an (explicit example of a) strictly convex function. In this work, we construct a Riemannian manifold whose sectional curvature assumes any real value greater than a negative constant and admits a strictly convex function.


Finally, we establish a result relating a geometric property of Riemannian manifolds of positive sectional curvature with the set of minimum points of a convex function defined on the manifold. We illustrate, with an example, how this result can be useful to choose the initial point for an iterative method which seeks the minimum of a convex function.


This paper is organized as follows. In Section \ref{Pre} we introduce some notations, basic definitions and important properties of Riemannian manifolds. In Section \ref{conserv} we prove that the conservativity of the geodesic flow of a Riemannian manifold with infinite volume implies that all convex functions on the manifold are constant. In Section \ref{CFeSC} we construct a manifold whose sectional curvature assumes any real value greater than a negative constant and admits a strictly convex function. In Section \ref{SCF} we prove that if the set of minimum points of a convex function, defined on a Riemannian manifold of positive sectional curvature, is not empty then we can minimize it using souls of the manifold. 

\section{Preliminaries\label{Pre}}

In this section, we present some basics from Riemannian geometry. All the manifolds and vector fields here considered will be assumed to be differentiable (smooth). Let $M$ be a (smooth) manifold, we denote the space of (smooth) vector fields over $M$ by $\mathfrak{X}(M)$, the tangent bundle of $M$ will be denoted by $TM$ and the ring of smooth functions over $M$ by $\mathcal{D}(M)$. 

\begin{definition}
	An $r$-covariant tensor field $\omega$ on a Riemannian manifold $M$ is a $\mathcal{D}(M)$-multilinear mapping
	\begin{eqnarray*}
		\omega:&\underbrace{\mathfrak{X}(M)\times\cdots\times\mathfrak{X}(M)}&\to
		\mathcal{D}(M).\\
		&r\,\,\,factors&
	\end{eqnarray*}
\end{definition} 

A couple $(M, \langle\,,\,\rangle)$, where $M$ is a manifold and $\langle\,,\,\rangle$ is a smooth metric,  (inner product on each tangent space varying smoothly on $M$), is called a Riemannian manifold and $\langle\,,\,\rangle$ a Riemannian metric. The metric induces a map $f\in\mathcal{D}(M)\to\nabla\,f\in \mathfrak{X}(M)$ which associates to each $f$, its gradient. Let $D$ be the Levi-Civita connection associated to ($M,\langle\,,\,\rangle$). The differential of $X\in\mathfrak{X}(M)$ is the $\mathcal{D}(M)$-linear operator $A_X:\mathfrak{X}(M)\to\mathfrak{X}(M)$, given by $A_X(Y):=D_YX$. To each point $p\in M$, we assign the linear map $A_X(p):T_pM\to T_pM$ defined by $A_X(p)v=D_vX$. In particular, if $X=\nabla\,f$, then $A_X(p)$ is the Hessian of $f$ at $p$ and is denoted by ${\rm Hess}\,f$.

Given a vector field $V\in\mathfrak{X}(M)$ on a Riemannian manifold $M$ and an $r$-covariant tensor field $\omega$, the Lie derivative of $\omega$ with respect to $V$ is defined by $$(\mathcal{L}_V\omega)(X_1,...,X_r)=V\left(\omega(X_1,...,X_r)\right)-
\sum_{i=1}^r\omega(X_1,...,[V,X_i],...,X_r).$$

\noindent For instance, if $\omega=\langle\,,\rangle$ then $(\mathcal{L}_V\langle,\rangle)(X,Y)=\langle D_XV,Y\rangle+\langle X,D_YV\rangle$. We say that $V\in\mathfrak{X}(M)$ is a conformal vector field if there exists $\phi\in\mathcal{D}(M)$, which is called the conformal factor of $V$, such that
$$\mathcal{L}_V\langle\,,\,\rangle=2\phi\langle\,,\,\rangle.$$

An interesting particular case of a conformal vector field $V$ occurs when $D_XV=\phi X$ for all $X\in\mathfrak{X}(M)$, in this case we say that $V$ is closed.

Now, let $\pi:TM\to M$ be the projection map. For any normal neighbourhood $U$ of a point $p\in M$ there is a canonical map $\tau:\pi^{-1}(U)\to T_pM$ defined as follows: for $Z=(q,w)\in\pi^{-1}(U)$, the image $\tau(Z)$ is obtained by a parallel translation of $w$ along the unique geodesic arc in $U$ joining the point $q=\pi(Z)$ to $p$. The connection map corresponding to $D$ is a map $\kappa:T(TM)\to TM$, inducing for any $Z=(p,z)\in TM$ a linear map of $T_Z(TM)$ into $T_{\pi(Z)}M$, and defined as follows: let $A\in T_Z(TM)$ and $\stackrel{\sim}{Z}: t\to\stackrel{\sim}{Z}(t)$ be a path in $TM$ representing $A$ at $t=0$; then
\[
\kappa(A)=\lim_{t\to0}\frac{\tau(\stackrel{\sim}{Z}(t))-z}{t}.
\]

The induced Riemannian metric $G$ in $TM$ (Riemann-Sasaki metric) is determined by the rule
\[
G(A,B)=\langle d\pi(A),d\pi(B)\rangle+\langle \kappa(A),\kappa(B)\rangle,
\]
where $A,B\in T_Z(TM)$ and $Z\in TM$. For more details about Riemann-Sasaki metric see, for instance, Kowalski \cite{K}.


To define soul of a Riemannian manifold one needs, firstly, the following concepts: a Riemannian submanifold $S$ of $M$ is called totally geodesic if all geodesics in $S$ are also geodesics in $M$ and it is said to be totally convex if for all points $p,q$ in $S$, all geodesics joining $p$ to $q$ are contained in $S$.


\begin{definition}
	Let $M$ be a complete manifold and $S\subset M$ a compact totally convex, totally geodesic submanifold such that $M$ is diffeomorphic to the normal bundle of $S$. The submanifold $S$ is called a soul of $M$.
\end{definition}

In general the soul is not uniquely determined, but any two souls of $M$ are isometric. Gromoll and Meyer \cite{CHE} proved that, when the sectional curvature of $M$ is positive, the soul $S$ of $M$ consists of a single point ($simple\,\,point$) and $M$ is diffeomorphic to an Euclidean space. A point $p\in M$ is said to be simple if there are no geodesic loops in $M$ closed at $p$. In their paper \cite{CHE}, Gromoll and Meyer proved that \textit{the set of simple points in $M$ is open} implying that the set of the souls can not consists of a single point.

\section{Conservativity and non-existence of convex functions\label{conserv}}

Let $M$ be a complete Riemannian manifold and $\theta= (p,v)\in TM$, and denote by $\gamma_{\theta}(t)$ the unique geodesic with initial conditions $\gamma_{\theta}(0)=p$ and $\gamma_{\theta}'(0)=v$. For a given $t\in \Bbb{R}$, we define a diffeomorphism of the tangent bundle $TM$
\[
\varphi_t:TM\to TM
\]
as follows $\varphi_t(\theta)=(\gamma_{\theta}(t),\gamma_{\theta}'(t))$. The family of
diffeomorphism $(\varphi_t)$ is in fact a flow (called \textit{geodesic flow}), that is,
it satisfies $\varphi_{t+s}=\varphi_t\circ\varphi_s$.

Denote by $SM$ the unit tangent bundle of $M$, that is, the subset of $TM$ given by those pairs $\theta=(p,v)$ such that $v$ has norm one. Since geodesics travel with constant speed, we see that $\varphi_t$ leaves $SM$ invariant, that is, given $\theta\in SM$ then for all $t\in\Bbb{R}$ we have $\varphi_t (\theta)\in SM$. So, $(\varphi_t)$ preserves the Liouville measure of the unit tangent bundle. The Liouville measure may be described as follows: every inner product in a finite-dimensional vector space induces a volume element in that space, relative to which the cube spanned by any orthonormal basis has volume $1$. In particular, the Riemannian metrics induces a volume element $dv$ on each tangent space of $M$. Integrating this volume element along $M$, we get a volume measure $dx$ on the manifolds itself. The Liouville measure of $TM$ is given, locally, by the product $\mu=dxdv$. If $m$ denotes the Liouville measure restricted to the unit tangent bundle $SM$, it is known (see, for instance, Paternain \cite{GP}) that $m$ is invariant under the geodesic flow, i.e., the diffeomorphism $\varphi_t:SM\to SM$ is a measure-preserving transformation for all $t\in\Bbb{R}$ [that is $m(B)=m(\varphi_t^{-1}(B))$ for all $B\subset SM$ and $t\in\Bbb{R}$].


\begin{definition}
	The geodesic flow is conservative with respect to the Liouville measure if, given any measurable set $A\subset SM$, for $m$-almost all $\theta\in A$ there exists a sequence $(t_n)_{n\in\Bbb{N}}$ in $\Bbb{R}$ converging to $+\infty$ such that $\varphi_{t_n}(\theta) \in A$ for all $t_n$.
\end{definition}

In the class of Riemannian manifolds with finite volume, from Poincar\'e recurrence theorem it follows that the geodesic flow $\varphi_t:SM\to SM$ is conservative. This property was used by Yau (see \cite{Y}) to generalize the result of Bishop and O'Neill (see \cite{BO}), Yau proved that there is no non-trivial continuous convex function on a complete Riemannian manifold of finite volume. We emphasize that there are Riemannian manifolds of infinite volume whose geodesic flow is conservative (see remark below).  

\begin{remark}
	A hyperbolic surface is a complete two-dimensional Riemannian manifold of constant curvature $-1$. Every such surface has the unit disc as universal cover and can be viewed as $H/\Gamma$, where $H$ is the unit disc equipped with the hyperbolic metric and $\Gamma$ is the covering group of isometries of $H$. Nicholls \cite{N} proved that ``\textit{The geodesic flow on the hyperbolic surface $H/\Gamma$ is conservative and ergodic if and only if the Poincar\'e series of $\Gamma$ diverges at $s=1$}". Such surfaces are called of divergence type. In \cite{H}, Hopf proved that geodesic flows on hyperbolic surfaces of infinite area are either totally dissipative or conservative and ergodic. Thus, surfaces of divergent type with infinite area are examples of Riemannian manifolds with infinite volume whose geodesic flow is conservative. 
\end{remark}

\begin{lemma}\label{recurrentpoint}
	If the geodesic flow $\varphi_t:SM\to SM$ is conservative with respect to the Liouville measure, then for 
	$m$-almost all $\theta \in SM$ there exists a sequence $(t_n)_{n\in\Bbb{N}}$ in $\Bbb{R}$ converging to $+\infty$ such that $\varphi_{t_n}(\theta)\to\theta$.
\end{lemma}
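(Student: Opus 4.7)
The plan is to upgrade the definition of conservativity, which only gives recurrence into a \emph{fixed} measurable set for almost every starting point of that set, to recurrence into \emph{every} neighbourhood of the starting point. Since $SM$ is second countable, this can be done by a diagonal/countable union argument.

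First I would fix a countable basis $\{V_k\}_{k\in\Bbb{N}}$ of open sets for the topology of $SM$ (this exists because $M$, and hence $SM$, is a second countable manifold). Each $V_k$ is measurable, so I can apply the definition of conservativity to $A=V_k$: there is an $m$-null set $N_k\subset V_k$ such that every $\theta\in V_k\setminus N_k$ admits a sequence $(t_n^{(k)})$ with $t_n^{(k)}\to+\infty$ and $\varphi_{t_n^{(k)}}(\theta)\in V_k$ for every $n$. Let $N=\bigcup_{k}N_k$; by countable subadditivity $m(N)=0$.

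Now I claim that every $\theta\in SM\setminus N$ satisfies the conclusion. Fix such a $\theta$ and fix any metric on $SM$ compatible with its topology (e.g.\ the distance induced by the Riemann--Sasaki metric $G$). For each $j\in\Bbb{N}$ pick an index $k(j)$ with $\theta\in V_{k(j)}\subset B(\theta,1/j)$, which is possible since $\{V_k\}$ is a basis. Because $\theta\notin N_{k(j)}$ (as $\theta\notin N\supset N_{k(j)}$), the conservativity statement applied to $V_{k(j)}$ gives some time $t_j\geq j$ with $\varphi_{t_j}(\theta)\in V_{k(j)}\subset B(\theta,1/j)$. After possibly passing to a subsequence to guarantee monotonicity, the sequence $(t_j)$ tends to $+\infty$ and $\varphi_{t_j}(\theta)\to\theta$, which is the conclusion of the lemma.

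The only mildly delicate point is making sure that the exceptional set we remove is still null; this is handled cleanly by taking a countable basis and a countable union of null sets, which is why second countability of $SM$ is essential. Everything else is a straightforward packaging of the hypothesis.
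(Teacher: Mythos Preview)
Your proof is correct and follows essentially the same route as the paper's own argument: choose a countable basis for $SM$, apply conservativity to each basis element, discard the countable union of the resulting null exceptional sets, and observe that any surviving point returns to arbitrarily small basic neighbourhoods at arbitrarily large times. If anything, your write-up is slightly more careful than the paper's, since you spell out the diagonal step (picking $t_j\ge j$ with $\varphi_{t_j}(\theta)\in V_{k(j)}\subset B(\theta,1/j)$) that produces a single sequence, whereas the paper leaves this implicit.
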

\begin{proof}
	Since $SM$ is a manifold, there is a countable basis $\{V_i\}_{i \in \Bbb{N}}$ for the topology of $SM$ such that $m(V_i) <+\infty$, for every $i$. On the other hand, the geodesic flow is conservative thus, for every $i$, there exists a proper subset $U_i\subset V_i$ with $m(U_i) = m(V_i)$ satisfying: if $\theta \in V_i$ then there exists a sequence $(t_n)_{n\in\Bbb{N}}$ in $\Bbb{R}$ converging to $+\infty$ such that $\varphi_{t_n}(\theta) \in V_i$. Now, note that $m(\stackrel{\sim}{U})=0$ where $\stackrel{\sim}{U} =\bigcup_{i\in \Bbb{N}} (V_i \setminus U_i) $ and if $\theta \in SM \setminus\stackrel{\sim}{U}$ there exists a sequence $(t_n)_{n\in\Bbb{N}}$ in $\Bbb{R}$ converging to $+\infty$ such that $\varphi_{t_n}(\theta)\to\theta$. This concludes the proof. \qed
\end{proof}

\begin{remark}
	Cruz Neto et al. \cite{CNMS} considered the case where $M$ is a complete non-compact Riemannian manifold with finite volume, then $m(SM)<+\infty$ and they used the Poincar\'e recurrence theorem to get a characterization of the $C^1$ monotone vector fields.
\end{remark}

In order to prove our first result we need the following lemma

\begin{lemma}\label{recurrentpoint1}
	If the geodesic flow $\varphi_t:SM\to SM$ is conservative with respect to the Liouville measure, then for $m$-almost all $\theta=(p,v)\in SM$ there are sequences $(t_n)_{n\in\Bbb{N}},(s_n)_{n\in\Bbb{N}}$ in $\Bbb{R}$ converging to $+\infty$ such that $\varphi_{t_n}(\theta)\to\theta$ and $\varphi_{-s_n}(\theta)\to \overline{\theta}=(p,-v)$.
\end{lemma}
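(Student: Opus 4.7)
The plan is to apply Lemma~\ref{recurrentpoint} twice --- once to $\theta$ and once to the velocity-reversed point $\bar\theta=(p,-v)$ --- and to transport the second conclusion back to $\theta$ via the natural velocity-flip involution of $SM$. The forward sequence $(t_n)$ with $\varphi_{t_n}(\theta)\to\theta$ is supplied at once by Lemma~\ref{recurrentpoint} applied at $\theta$, so the work lies entirely in producing the backward sequence.

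I would introduce the involution $F\colon SM\to SM$ given by $F(q,w)=(q,-w)$. Being a fiberwise orthogonal map, $F$ is a smooth isometry of $SM$ with respect to the Sasaki metric, and therefore preserves the Liouville measure $m$. Since $\gamma_{(q,-w)}(t)=\gamma_{(q,w)}(-t)$, a direct computation yields the conjugation identity
\[
F\circ\varphi_t=\varphi_{-t}\circ F\quad\text{for every }t\in\Bbb{R},
\]
equivalently $\varphi_{-t}=F\circ\varphi_t\circ F$.

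Let $A\subset SM$ be the full-measure set produced by Lemma~\ref{recurrentpoint}. Since $F$ preserves $m$, the set $F(A)$ also has full measure, and so does $A\cap F(A)$. For any $\theta$ in this intersection the point $\bar\theta=F(\theta)$ lies in $A$, and Lemma~\ref{recurrentpoint} applied at $\bar\theta$ furnishes a sequence $s_n\to+\infty$ with $\varphi_{s_n}(\bar\theta)\to\bar\theta$. The conjugation identity then gives
\[
\varphi_{-s_n}(\theta)=F\bigl(\varphi_{s_n}(F(\theta))\bigr)=F\bigl(\varphi_{s_n}(\bar\theta)\bigr),
\]
and continuity of $F$ converts the recurrence $\varphi_{s_n}(\bar\theta)\to\bar\theta$ into convergence of $\varphi_{-s_n}(\theta)$ toward $F(\bar\theta)$, which is the second required convergence. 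Intersecting with the full-measure set producing the first sequence yields both sequences simultaneously on a single full-measure set.

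The only step demanding any care is the conjugation $F\circ\varphi_t=\varphi_{-t}\circ F$; once this is in hand the argument reduces to a routine measure-theoretic intersection plus transport along the continuous involution $F$, with no further dynamical input beyond Lemma~\ref{recurrentpoint}.
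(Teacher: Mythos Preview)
Your approach is exactly the paper's: introduce the velocity-flip involution (the paper calls it $T$), observe it preserves the Liouville measure, intersect the full-measure recurrent set from Lemma~\ref{recurrentpoint} with its image under the flip, and transfer recurrence of $\bar\theta$ to backward recurrence of $\theta$ via the conjugation $F\circ\varphi_t=\varphi_{-t}\circ F$.

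One small point to flag: your computation yields $\varphi_{-s_n}(\theta)\to F(\bar\theta)=\theta$, not $\bar\theta$ as the lemma literally asserts. The paper's own proof has the analogous slip, writing ``$\varphi_t(\overline{\theta})=\varphi_{-t}(\theta)$'' without the flip. In fact the correct limit is $\theta$, and this is all that is used downstream in Theorem~\ref{th2}, which only needs $\gamma_\theta(-s_n)\to p$; so the discrepancy is cosmetic, but you should not call $F(\bar\theta)$ ``the second required convergence'' without noting that $F(\bar\theta)=\theta$ and that this (rather than $\bar\theta$) is the actual limit.
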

\begin{proof}
	Consider the map $T:SM\to SM $ defined by $T(p,v)=(p,-v)$, it is not hard to see that $T$ is an isometry with respect to Sasaki metric whose Riemannian volume coincides with the Liouville measure. Consider the set 
	\[
	\Omega=\{\theta=(p,v)\in SM:\,\exists\,(t_n)_{n\in\Bbb{N}}\subset\Bbb{R}\,\,such\,\,that\,\,t_n\to+\infty\,\, and \,\,\varphi_{t_n}(\theta)\to\theta\}.
	\]
	Since $m(SM\setminus\Omega) = 0$ and $T$ is an isometry, it follows that $m(SM\setminus T(\Omega)) = 0$. Therefore, 
	\[
	m(SM\setminus(\Omega\cap T(\Omega)))=m\left((SM\setminus\Omega)\cup(SM\setminus T(\Omega)) \right)=0. 
	\]
	Note that if $\theta=(p,v)\in\Omega\cap T(\Omega)$ then $\theta=(p,v),\overline{\theta}=
	(p,-v)\in\Omega$, since $\varphi_t(\overline{\theta})=\varphi_{-t}(\theta)$ and we are done. \qed
\end{proof}

A function $f:M\to\Bbb{R}$ on a Riemannian manifold $M$ is convex if its restriction to every geodesic in $M$ is a convex function along the geodesic, i.e., if for every geodesic segment $\gamma:[a,b]\to\Bbb{R}$ and every $t\in[0,1]$,\
\[
f(\gamma((1-t)a+tb))\leq(1-t)f(\gamma(a))+tf(\gamma(b)).
\] 
A convex function $f$ is strictly convex if this inequality is strict whenever $t\in(0,1)$. A convex function is always continuous. If $f$ is smooth, it is known that $f$ is (strictly) convex provided its hessian is positive (definite) semidefinite, or equivalently if $(f\circ\gamma)''\geq0\,(>0)$ for every geodesic $\gamma:I\subset\Bbb{R}\to M$.

\begin{theorem}\label{th2}
	Let $M$ be a connected complete Riemannian manifold. If the geodesic flow $\varphi_t:SM\to SM$ is conservative with respect to the Liouville measure, then all continuous convex functions on $M$ are constant.
\end{theorem}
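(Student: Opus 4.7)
The plan is to reduce the problem to a one-dimensional convex analysis fact using the recurrence guaranteed by Lemma \ref{recurrentpoint1}, and then promote ``$f$ constant along $m$-almost every geodesic'' to ``$f$ constant on $M$'' by Fubini, continuity, and Hopf--Rinow. So fix a continuous convex $f:M\to\mathbb{R}$ and, for each $\theta=(p,v)\in SM$, introduce $h_{\theta}(t):=f(\gamma_{\theta}(t))$, which is convex and continuous on $\mathbb{R}$ by the definition of convexity recalled just before the theorem.

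The first step is to apply Lemma \ref{recurrentpoint1} to obtain, for $m$-almost every $\theta=(p,v)\in SM$, sequences $t_n\to+\infty$ and $s_n\to+\infty$ with $\varphi_{t_n}(\theta)\to\theta$ and $\varphi_{-s_n}(\theta)\to\overline{\theta}=(p,-v)$. Projecting by $\pi$ and using continuity of $f$ gives $h_{\theta}(t_n)\to h_{\theta}(0)$ and $h_{\theta}(-s_n)\to h_{\theta}(0)$. I would then argue that any convex continuous function $h:\mathbb{R}\to\mathbb{R}$ with this two-sided recurrence at $\pm\infty$ is constant: since the right-derivative $h'_{+}$ is non-decreasing, if $h'_{+}(t_*)=c>0$ at some $t_*$, the convex support inequality $h(t)\geq h(t_*)+c(t-t_*)$ for $t\geq t_*$ forces $h(t_n)\to+\infty$, contradicting $h(t_n)\to h(0)$; symmetrically $c<0$ contradicts the convergence $h(-s_n)\to h(0)$. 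Hence $h'_{+}\equiv 0$ and $h_\theta$ is constant. I expect this step to be the main point of the proof, in that it is where the two-sided recurrence of Lemma \ref{recurrentpoint1} is really used.

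The second step is the passage from $SM$ to $M$. Because the Liouville measure on $SM$ disintegrates locally as $dm=dx\,d\sigma_p$, with $d\sigma_p$ the canonical measure on the unit sphere $S_pM$, Fubini produces a point $p_0\in M$ such that, for $\sigma_{p_0}$-almost every $v\in S_{p_0}M$, the function $t\mapsto f(\gamma_{(p_0,v)}(t))$ is constant equal to $f(p_0)$. This set of ``good'' directions is dense in $S_{p_0}M$, and the map $(v,t)\mapsto f(\exp_{p_0}(tv))$ is continuous on $S_{p_0}M\times\mathbb{R}$ (here completeness of $M$ ensures $\exp_{p_0}$ is defined on all of $T_{p_0}M$). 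Approximating arbitrary directions by good ones then yields $f(\exp_{p_0}(tv))=f(p_0)$ for every $v\in S_{p_0}M$ and every $t\in\mathbb{R}$.

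Finally, by the Hopf--Rinow theorem, since $M$ is complete and connected, $\exp_{p_0}:T_{p_0}M\to M$ is surjective, so $f\equiv f(p_0)$ on all of $M$, which is exactly the conclusion. The only delicate point beyond the one-dimensional convex analysis step above is ensuring that one can carry out Fubini globally, but this follows by covering $M$ with coordinate charts over which $m$ splits as a product and noting that a countable union of null sets is null.
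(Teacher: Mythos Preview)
Your proposal is correct and follows the same overall architecture as the paper: invoke Lemma~\ref{recurrentpoint1} to see that $f$ is constant along $m$-almost every geodesic, then use continuity and density to upgrade this to all of $M$. Your one-dimensional argument via the monotone right-derivative is a fuller justification of the fact the paper simply states in its first paragraph.

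The only genuine difference is in the passage from $SM$ to $M$. The paper argues directly in $SM$: for \emph{any} $\theta$, approximate by ``good'' $\theta_i$ and use continuity of $f$ and of the geodesic flow to get $f\circ\gamma_\theta$ constant; hence $f$ is constant along every geodesic, so locally constant, so constant by connectedness. You instead disintegrate $m$ via Fubini to locate a single basepoint $p_0$ with a dense set of good directions, extend by continuity, and then invoke Hopf--Rinow surjectivity of $\exp_{p_0}$. Both routes are valid; the paper's is slightly more economical (no Fubini, no Hopf--Rinow), while yours makes explicit the role of completeness through the exponential map.
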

\begin{proof}
	Suppose that $f:M\to\Bbb{R}$ is a convex function. Note that if $\gamma(t)$ is any geodesic in $M$ with 
	\[
	\lim_{t_n\to+\infty}f(\gamma(t_n))=\lim_{s_n\to-\infty}f(\gamma(s_n))\in\Bbb{R},
	\] 
	for sequences $(t_n),\,(s_n)\subset \Bbb{R}$, then $f$ is constant on the geodesic $\{\gamma(t):\,t\in\Bbb{R}\}$.
	
	As the geodesic flow on $M$ is conservative, then by Lemma \ref{recurrentpoint1} it follows that for $m$-almost all point $\theta=(p,v)\in SM$ there are sequences $(t_n)_{n\in\Bbb{N}}, (s_n)_{n\in\Bbb{N}}    $ in $\Bbb{R}$ converging to $+\infty$ such that $\varphi_{t_n}(\theta)\to\theta$ and $\varphi_{-s_n}(\theta)\to\overline{\theta} =(p,-v)$.
	
	Writing $\varphi_t(\theta)=(\gamma_{\theta}(t),\gamma_{\theta}'(t))$, we obtain that for $m$-almost all $\theta=(p,v)\in SM$ there are sequences $(t_n),(\overline{s}_n) \subset\Bbb{R}$ such that $t_n\to+\infty$, $\overline{s}_n:=-s_n\to-\infty$ and 
	\[
	\lim_{t_n\to+\infty}\gamma_{\theta}(t_n)=p=\lim_{\overline{s}_n\to-\infty}\gamma_{\theta}(\overline{s}_n).
	\]
	
	Then, as we noted in the first paragraph of this proof, $f$ must be constant on $\gamma_{\theta}(t)$. Now, let $\theta =(p,v)$ be any point in $SM$ and $\{\theta_i\}=\{(p_i,v_i)\}$ a sequence converging to $\theta$ where each $\theta_i$ satisfies the above property. Using the continuity of the function $f$ and the fact that the geodesic flow is continuous it follows that $f$ is also constant on $\gamma_{\theta}(t)$. In particular, $f$ is locally constant and by connectedness it follows that $f$ is constant. \qed
\end{proof}

\section{Convex functions and the sectional curvature\label{CFeSC}}

The known examples of strictly convex functions are associated with the sign of the sectional curvature of the Riemannian manifold, for example:
\begin{itemize}
	\item (Theorem 1 (a), Greene and Wu \cite{GW}) If $M$ is a complete non-compact Riemannian manifold of positive sectional curvature, then there exists on $M$ a $C^{\infty}$ Lipschitz continuous strictly convex function such that, for every $\lambda\in\Bbb{R}$, $f^{-1}(]-\infty,\lambda])$ is a compact subset of $M$.
	
	\item (Theorem 4.1, \cite{BO}) Let $M$ be a complete simply connected Riemannian manifold of non positive sectional curvatures $K\leq 0$.
	\begin{enumerate}
		\item[(a)] If $S$ is a closed, totally geodesic submanifold of $M$, then the $C^{\infty}$ function $f_S:M\to\Bbb{R}$ defined by $f_S(x):=d^2(x,S)$ is convex.
		\item[(b)] In (a), if $S$ is a single point $p$, then $f_p$ is strictly convex.
	\end{enumerate}
\end{itemize}

\vspace{.2cm}

However, the sign change of the sectional curvatures does not obstructs the existence of convex functions. In the following example we present a convex function defined on the tangent bundle of a Riemannian manifold.

\begin{example}\label{cfTM} 
	Let $(M,\langle\,,\,\rangle)$ be a complete Riemannian manifold and $(TM,G)$ its tangent bundle, where $G$ is the Riemann-Sasaki metric induced on $TM$. A point in $TM$ is represented by an ordered pair $(p,v)$, where $p\in M$ and $v\in T_pM$.
	
	Let us consider the \textit{kinetic energy} $E:TM\to \Bbb{R}$ defined by $E(p,v)=\langle v,v\rangle$. It is known in the literature (Theorem 3.6, pp 205, Udriste \cite{U}) that $E$ is a $C^{\infty}$ convex function on $(TM,G)$. Furthermore, the kinetic energy is not strictly convex. In fact, given a geodesic $\gamma(t)$ we get that $\beta(t)=(\gamma(t),\gamma'(t))$ is a geodesic in $TM$ and $E(\beta(t))$ is constant implying that the function $E:TM\to\Bbb{R}$ is not strictly convex.
	
	Choosing $ M $ so that the sectional curvature of $TM$ changes its sign, we have a Riemannian manifold whose sectional curvature changes its sign and admits a convex function. \qed
\end{example}

In this context, a question naturally arises:

\vspace{.2cm}

\noindent\textbf{Question.} \textit{Does the sign change of curvature imply the non existence of strictly convex functions?}

In the next theorem we present a geometric condition that ensures the existence of (strictly) convex functions on a particular class of complete non-compact Riemannian manifolds. As a corollary we get the (negative) answer to the raised question. More specifically, 
\begin{theorem}\label{th1}
	Let $M$ be a complete non-compact Riemannian manifold and $V\in\mathfrak{X}(M)$ a closed conformal vector field with conformal factor $\phi$. If $\langle\nabla\phi,V\rangle$ $\geq0$, then the energy function $f=\frac{1}{2}\langle V,V\rangle$ is convex. In addition, if the vector field $V$ and the function $\phi$ never vanishes then $f$ is a strictly convex function.
\end{theorem}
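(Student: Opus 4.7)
The plan is to compute the Hessian of $f$ directly from the closed-conformal structure and then to reduce it to a manifestly non-negative quadratic form. Using $D_X V = \phi X$, one has $X(f) = \langle D_X V, V\rangle = \phi \langle X, V\rangle$, so $\nabla f = \phi V$. Differentiating once more,
\[
D_X \nabla f \;=\; X(\phi) V + \phi\, D_X V \;=\; \langle \nabla \phi, X\rangle V + \phi^2 X,
\]
which yields
\[
\mathrm{Hess}\, f(X,X) \;=\; \langle \nabla \phi, X\rangle \langle V, X\rangle + \phi^2 |X|^2.
\]
The term $\phi^2 |X|^2$ is manifestly non-negative; the issue is to control the sign of the cross term, since the hypothesis $\langle \nabla \phi, V\rangle \geq 0$ by itself controls the inner product only in the direction of $V$, not in the direction of an arbitrary $X$.

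The crucial step, and the main obstacle, is to show that $\nabla \phi$ is automatically parallel to $V$ at every point where $V \neq 0$. For this I would use $D_X V = \phi X$ together with the torsion-free property of the Levi-Civita connection to compute
\[
R(X,Y) V \;=\; D_X D_Y V - D_Y D_X V - D_{[X,Y]} V \;=\; \langle \nabla \phi, X\rangle Y - \langle \nabla \phi, Y\rangle X.
\]
The antisymmetry $\langle R(X,Y)V, V\rangle = 0$ then forces
\[
\langle \nabla \phi, X\rangle \langle Y, V\rangle \;=\; \langle \nabla \phi, Y\rangle \langle X, V\rangle.
\]
Specializing $X = V$ shows that at every point where $V \neq 0$ one has $\nabla \phi = \alpha V$ with $\alpha = \langle \nabla \phi, V\rangle / |V|^2$, and the standing hypothesis gives $\alpha \geq 0$.

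Substituting $\nabla \phi = \alpha V$ back into the Hessian formula, wherever $V \neq 0$ one obtains
\[
\mathrm{Hess}\, f(X,X) \;=\; \alpha \langle V, X\rangle^2 + \phi^2 |X|^2 \;\geq\; 0,
\]
while at a point where $V = 0$ the cross term drops out and the same inequality holds trivially; this proves convexity. For the second assertion, if $V$ and $\phi$ never vanish then $\phi^2 |X|^2 > 0$ whenever $X \neq 0$, so the Hessian is pointwise positive definite and $f$ is strictly convex. The heart of the argument is the rigidity relation $\nabla \phi \parallel V$ forced by the curvature identity; once that is in hand, everything reduces to a sum of two non-negative terms.
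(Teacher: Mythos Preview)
Your argument is correct and follows the same overall architecture as the paper: compute $\nabla f=\phi V$, obtain $\mathrm{Hess}\,f(X,X)=\langle\nabla\phi,X\rangle\langle V,X\rangle+\phi^2|X|^2$, show that $\nabla\phi$ is proportional to $V$, and conclude. The two proofs differ only in how they extract the relation $\nabla\phi\parallel V$. The paper simply writes $\mathrm{Hess}\,f(X,Y)$ with two arguments and uses the symmetry of the Hessian to get $\langle X,\nabla\phi\rangle\langle V,Y\rangle=\langle Y,\nabla\phi\rangle\langle V,X\rangle$ directly---no curvature tensor enters. Your route via $R(X,Y)V=\langle\nabla\phi,X\rangle Y-\langle\nabla\phi,Y\rangle X$ and the skew-symmetry $\langle R(X,Y)V,V\rangle=0$ is equally valid and recovers the same identity, at the cost of a slightly longer computation. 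On the other hand, your treatment of the zero set of $V$ is cleaner: you observe that at a point with $V=0$ the cross term $\langle\nabla\phi,X\rangle\langle V,X\rangle$ vanishes outright, so $\mathrm{Hess}\,f(X,X)=\phi^2|X|^2\ge 0$ there as well, and no appeal to the discreteness of the zero set (the paper cites Montiel for this) is needed.
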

\begin{proof} For every vector fields $X,Y\in\mathfrak{X}(M)$ we have
	\[
	X(f)=\langle D_XV,V\rangle=\langle\phi X,V\rangle,
	\]
	then $\nabla f=\phi V$. Now, note that
	\begin{eqnarray*}
		{\rm Hess}\,f(X,Y)&=&\langle D_X\nabla f,Y\rangle=\langle D_X(\phi V),Y\rangle\\
		&=&\langle X,\nabla \phi\rangle\langle V,Y\rangle+\langle\phi D_XV,Y\rangle\\
		&=&\langle X,\nabla \phi\rangle\langle V,Y\rangle+\phi^2\langle X,Y\rangle.
	\end{eqnarray*}
	Since both ${\rm Hess}\,f$ and the metric are symmetric tensors, we deduce
	\[
	\langle X,\nabla \phi\rangle\langle V,Y\rangle=\langle Y,\nabla \phi\rangle\langle V,X\rangle
	\]
	for all $X,Y\in\mathfrak{X}(M)$. So 
	\[
	|V|^2\cdot\nabla\phi=\langle V,\nabla\phi\rangle\cdot V.
	\]
	Therefore,
	\begin{eqnarray*}
		|V|^2\cdot{\rm Hess}\,f(X,X)&=&\langle V,\nabla\phi\rangle\langle V,X\rangle^2
		+\phi^2|V|^2|X|^2\geq \phi^2|V|^2|X|^2.
	\end{eqnarray*}
	
	Since the set of the points of $M$ where $V$ vanishes is a discrete set (see, for instance, Montiel \cite{Mon}), we conclude the proof of the theorem. \qed
\end{proof}

Now, let us apply Theorem \ref{th1} to construct examples of strictly convex functions on Riemannian manifolds whose sectional curvature assumes any real value greater than a negative constant. We first describe some preliminaries which will be useful to understand the construction.

Let $(B,\langle\,,\,\rangle_B)$ and $(P,\langle\,,\,\rangle_P)$ be Riemannian manifolds and $g>0$ be a positive smooth function on $B$. Set $M=B\times P$ (with the structure of product manifold), and let $\pi_{B}:M\to B$ and $\pi_{P}:M\to P$ denote the canonical projections; we equip $M$ with the {\it warped metric} $\langle\,,\,\rangle$, given by
\[
\langle X,Y\rangle=\langle d\pi_{B}(X),d\pi_{B}(Y)\rangle_B+(g\circ\pi_B)^2\langle d\pi_{P}(X),d\pi_{P}(Y)\rangle_P
\]
and denote the resulting Riemannian space by $M=B\times_{g}P$. We remark that the warped metric will be complete, for any $g$ as above if, and only if $B$ and $P$ are complete.

In this case, $B$ is called the base of $M$ and $P$, the fiber. It is easy to see that the \textit{fibers} $\{b\}\times P=\pi_B^{-1}(b)$ and the \textit{leaves} $B\times\{p\}= \pi_P^{-1}(p)$ are Riemannian submanifolds. Tangent vectors to the leaves are called \textit{horizontal} and tangent vectors to the fibers, \textit{vertical}.

\begin{proposition}\label{ONeill} {\rm[Proposition 42 (5), O'Neill \cite{O}]}
	Let $M=B\times_gP$ be a warped product with Riemannian curvature tensor $R$ and let $R^P$ be the lift to $M$ of the Riemannian curvature tensor of $P$. If $U,V,W\in TM$ are vertical, then
	\[
	R(V,W)U=R^P(V,W)U-\frac{|\nabla\,g|^2}{g^2}\left(\langle V,U\rangle W-\langle W,U\rangle V\right).
	\]
	If $U,V\in TM$ are orthonormal, $K$ and $K^P$ denote the sectional curvature of $M$ and $P$ we get
	\[
	K(U,V)=\frac{1}{g^2}\left(K^P(U,V)-|\nabla\,g|^2\right).
	\]
\end{proposition}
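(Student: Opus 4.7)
The plan is to compute $R(V,W)U$ directly from the curvature definition after first nailing down the Levi--Civita connection of the warped metric. For a vector field on $M$ coming from $P$ (a \emph{vertical} field) and one coming from $B$ (a \emph{horizontal} lift), the key connection identities are
\begin{equation*}
\nabla_X Y = \nabla^B_X Y,\qquad \nabla_X V = \nabla_V X = \frac{X(g)}{g}\,V,\qquad \nabla_V W = \nabla^P_V W - \frac{\langle V,W\rangle}{g}\,\nabla g,
\end{equation*}
for horizontal $X,Y$ and vertical $V,W$, where $\nabla g$ is the horizontal lift of the gradient of $g$ on $B$. I would derive these from the Koszul formula, using that $V(h\circ\pi_B)=0$ for vertical $V$, that $[X,V]=0$ when $X$ is a horizontal lift and $V$ a vertical lift, and that the restriction of $\langle\,,\,\rangle$ to the fibers is $g^2\langle\,,\,\rangle_P$. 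As an immediate corollary, applying $\nabla_V$ to the horizontal field $\nabla g$ gives the auxiliary identity $\nabla_V(\nabla g)=\tfrac{|\nabla g|^2}{g}\,V$.

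Now plug these formulas into $R(V,W)U=\nabla_V\nabla_W U-\nabla_W\nabla_V U-\nabla_{[V,W]}U$ with $U,V,W$ all vertical. Since $[V,W]$ is again tangent to the fibers, the purely fiber-connection pieces reassemble into $R^P(V,W)U$. The cross-terms that are linear in $\nabla g$ cancel by the anti-symmetry in $(V,W)$ (and because $V(g)=W(g)=0$), and the only remaining contribution comes from differentiating the correction $-\tfrac{\langle W,U\rangle}{g}\nabla g$ in the $V$-direction. Using $\nabla_V(\nabla g)=\tfrac{|\nabla g|^2}{g}V$ and its analogue with $V,W$ swapped yields
\begin{equation*}
R(V,W)U = R^P(V,W)U - \frac{|\nabla g|^2}{g^2}\bigl(\langle V,U\rangle W - \langle W,U\rangle V\bigr),
\end{equation*}
which is the first assertion.

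For the sectional curvature, take $U,V$ orthonormal in $M$, pair both sides of the above identity (with $W=V$) with $U$, and recall that $K(U,V)=\langle R(V,U)V,U\rangle$ since the Gram denominator equals $1$. The correction term contributes exactly $-|\nabla g|^2/g^2$. For the $R^P$ term, the vertical vectors satisfy $\langle\cdot,\cdot\rangle=g^2\langle\cdot,\cdot\rangle_P$, so $\langle U,U\rangle_P=\langle V,V\rangle_P=1/g^2$ and $\langle U,V\rangle_P=0$; hence $\langle R^P(V,U)V,U\rangle=g^2\langle R^P(V,U)V,U\rangle_P$ divided by the Gram denominator $1/g^4$ in $P$ produces $K^P(U,V)/g^2$, and the two pieces combine to give $K(U,V)=\bigl(K^P(U,V)-|\nabla g|^2\bigr)/g^2$. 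The only delicate step in the whole argument is the bookkeeping in the Koszul computation that produces the three connection formulas; once those are in hand the curvature computation is a short anti-symmetrization.
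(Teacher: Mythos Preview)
The paper does not prove this proposition at all; it is stated as a quotation from O'Neill's textbook (Proposition~42(5) in Chapter~7) and used as a black box. Your outline is exactly the standard derivation one finds there: first establish the warped-product connection formulas via Koszul, then antisymmetrize to isolate $R^P$ and the $|\nabla g|^2/g^2$ correction, and finally rescale to pass from $R^P$ to $K^P$. The argument is correct.

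One small slip: when you write ``with $W=V$'' to extract the sectional curvature, that substitution would give $R(V,V)U=0$. What you actually need is the relabeling $(V,W,U)\mapsto(V,U,V)$ (or equivalently $(U,V,V)$), so that the curvature identity yields $R(V,U)V$ and pairing with $U$ gives $K(U,V)$. The rest of your bookkeeping for the $K^P$ term---using $\langle\cdot,\cdot\rangle=g^2\langle\cdot,\cdot\rangle_P$ on vertical vectors so that the $P$-Gram denominator is $1/g^4$---is exactly right and produces the factor $1/g^2$.
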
 

An interesting class of spaces furnished with closed conformal vector fields is given by the following subclass of warped product spaces: when $B=I\subset\Bbb{R}$ is an open interval.

If $\partial_t=\nabla\pi_I$ is the standard unit vector field on $I$, then $V=(g\circ\pi_I)\partial_t$ is a closed conformal nowhere vanishing vector field on this manifold, with conformal factor $\phi=g'\circ\pi_I$. Therefore, $\nabla\phi=(g''\circ\pi_I) \partial_t$. In this case, the hypothesis of the Theorem \ref{th1} is equivalent to
\[
0\leq\langle V,\nabla\phi\rangle=\langle(g\circ\pi_I)\partial_t,(g''\circ\pi_I) \partial_t\rangle=(g\circ\pi_I)(g''\circ\pi_I).
\]

Thus, making use of Theorem \ref{th1} we conclude that the function $f:M=I\times_{g}P \to\Bbb{R}$ defined by $f=\frac{1}{2}\langle V,V\rangle$ is convex since that $g''\geq0$.

Now, consider the paraboloid of revolution $P^2=\{(x,y,z):\,z=x^2+y^2\}$, $B=\Bbb{R}$ and $g(t)=e^t$. From Proposition \ref{ONeill} the manifold $M^3=\Bbb{R}\times_{e^t}P^2$ has \textbf{vertical} sectional curvature
\[
K(t,x,y)=\frac{1}{e^{2t}}\left(\frac{4}{(1+4x^2+4y^2)^2}-e^{2t}\right).
\]

$M^3$ is a Riemannian manifold whose sectional curvature assumes any real value greater than $-1$ and, from Theorem \ref{th1}, admits a strictly convex function given that $\phi(t,x,y)=|V(t,x,y)|=e^t\ne0$ for all $t\in\,\Bbb{R}$.

We summarize the above construction with the following corollary.

\begin{corollary}\label{corol1}
	There is Riemannian manifold whose sectional curvature assumes any real value greater than a negative constant and admits a strictly convex function.
\end{corollary}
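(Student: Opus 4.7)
The plan is simply to assemble the warped-product construction indicated in the paragraphs preceding the corollary and verify both properties: (i) the existence of a strictly convex function, via Theorem \ref{th1}; and (ii) that the range of the sectional curvature is an interval of the form $(c,\infty)$ with $c<0$. I would take $B=\mathbb{R}$, $g(t)=e^{t}$, and for the fiber $P^{2}$ the paraboloid of revolution $\{(x,y,z):z=x^{2}+y^{2}\}\subset\mathbb{R}^{3}$ with its induced metric, and set $M^{3}=\mathbb{R}\times_{e^{t}}P^{2}$. Completeness of $M^{3}$ follows because $\mathbb{R}$ and $P^{2}$ are both complete.

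For the convex function, I would consider the closed conformal vector field $V=(g\circ\pi_{I})\partial_{t}$ described right after Proposition \ref{ONeill}, with conformal factor $\phi=g'\circ\pi_{I}$. With $g(t)=e^{t}$ one has $\phi=e^{t}$ and $\nabla\phi=e^{t}\partial_{t}$, so
\[
\langle V,\nabla\phi\rangle=(g\circ\pi_{I})(g''\circ\pi_{I})=e^{2t}\geq 0.
\]
Moreover neither $V$ nor $\phi$ ever vanishes, since $e^{t}>0$. Thus Theorem \ref{th1} applies and $f=\tfrac12\langle V,V\rangle=\tfrac12 e^{2t}$ is a strictly convex function on $M^{3}$.

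For the curvature, I would apply Proposition \ref{ONeill} to a unit vertical 2-plane. The Gaussian curvature of the paraboloid at $(x,y,x^{2}+y^{2})$ is the well-known expression $K^{P}(x,y)=4/(1+4x^{2}+4y^{2})^{2}$, and $|\nabla g|^{2}=e^{2t}$. Consequently the sectional curvature along vertical planes is
\[
K(t,x,y)=\frac{1}{e^{2t}}\!\left(\frac{4}{(1+4x^{2}+4y^{2})^{2}}-e^{2t}\right)=\frac{4}{e^{2t}(1+4x^{2}+4y^{2})^{2}}-1.
\]
The first term is a continuous positive function of $(t,x,y)$ whose image is all of $(0,\infty)$: fixing $x=y=0$ yields $4e^{-2t}$, which already surjects onto $(0,\infty)$ as $t$ ranges over $\mathbb{R}$. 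Therefore $K$ ranges over $(-1,\infty)$, and in particular takes every real value greater than the negative constant $-1$.

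I do not foresee a genuine obstacle; the only delicate point is to keep track of which tangent 2-planes Proposition \ref{ONeill} controls, i.e., that the stated curvature values are attained by vertical planes (which is enough for the corollary, since we only need the sectional curvature to \emph{assume} each value in $(-1,\infty)$, not that all planes have curvature in a fixed range). Once this is noted, combining the strict convexity of $f$ from Theorem \ref{th1} with the surjectivity of $K$ onto $(-1,\infty)$ concludes the proof.
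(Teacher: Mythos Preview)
Your proposal is correct and follows precisely the paper's own construction: the same warped product $M^{3}=\mathbb{R}\times_{e^{t}}P^{2}$, the same closed conformal field $V=(g\circ\pi_{I})\partial_{t}$ feeding into Theorem~\ref{th1}, and the same vertical-plane curvature formula from Proposition~\ref{ONeill}. If anything, you are slightly more explicit than the paper in checking completeness and in exhibiting the surjectivity of $K$ onto $(-1,\infty)$ via the slice $x=y=0$.
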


\section{Soul of a manifold and minimization of convex functions\label{SCF}}

In order to prove our third result we need the following lemma.

\begin{lemma}\label{function} {\rm[Theorem 1 (a),\cite{GW}]}
	If $M$ is a complete non-compact Riemannian manifold of everywhere positive sectional curvature, then there exists on $M$ a $C^{\infty}$ Lipschitz continuous strictly convex function such that, for every $\lambda\in\Bbb{R}$, $f^{-1}(]-\infty,\lambda])$ is a compact subset of $M$.
\end{lemma}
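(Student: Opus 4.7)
The plan is to combine the structure theory for positively curved manifolds with the Greene--Wu convex smoothing procedure. First, by the Gromoll--Meyer result recalled in Section~\ref{Pre}, everywhere positive sectional curvature forces the soul of $M$ to reduce to a single point $p_0$ and forces $M$ to be diffeomorphic to Euclidean space; in particular, from $p_0$ emanate rays to infinity in many directions, obtained as sublimits of minimizing geodesics to any divergent sequence of points.

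Second, I would construct a continuous convex exhaustion function on $M$ out of Busemann functions. For each ray $\gamma:[0,\infty)\to M$ starting at $p_0$, the Busemann function
\[
b_\gamma(x)=\lim_{t\to\infty}\bigl(t-d(x,\gamma(t))\bigr)
\]
is well defined, $1$-Lipschitz, and (because $M$ has non-negative sectional curvature) convex, by the Cheeger--Gromoll argument based on Toponogov triangle comparison. A suitable supremum or integral average of the family $\{b_\gamma\}$ over the rays issuing from $p_0$ (parametrized by directions in $S_{p_0}M$) then yields a continuous convex function $F$ on $M$. Properness of $F$ follows from the identity $b_\gamma(\gamma(s))=s$ along each ray together with Toponogov comparison, which forces $F(x)\to +\infty$ as $x$ leaves compact sets, so all sublevel sets $F^{-1}(]-\infty,\lambda])$ are compact.

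Third, I would apply the Greene--Wu smoothing theorem, which converts a continuous convex function on a Riemannian manifold into a $C^{\infty}$ convex approximation that is uniformly close on compact sets, by convolution against a Riemannian mollifier built from the exponential map. To upgrade weak to strict convexity without destroying the Lipschitz exhaustion property, I would exploit positivity of sectional curvature near the soul: on an injectivity ball around $p_0$ the squared distance $\rho^2$ is smooth with Hessian bounded below by Rauch I comparison, so adding a sufficiently small, appropriately cut-off smoothed multiple of $\rho^2$ to the smoothed $F$ produces the desired globally $C^{\infty}$, Lipschitz, strictly convex exhaustion $f$.

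The main obstacle I expect is the smoothing step: one must verify that the Greene--Wu convolution genuinely preserves convexity (this is the technical heart of their paper and requires delicate Jacobi-field estimates to control how the second variation of energy interacts with the mollifying kernel), and one must ensure that the resulting $f$ is \emph{strictly} convex globally, not only near $p_0$. The latter depends on $F$ not being linear along any geodesic, which in turn uses positivity of sectional curvature: two distinct rays from $p_0$ cannot remain asymptotic along a common geodesic line, so their Busemann functions cannot jointly degenerate in any tangent direction, and the strictly convex perturbation near $p_0$ can be propagated outward by the smoothing.
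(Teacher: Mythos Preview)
The paper does not prove this lemma at all: it is stated verbatim as Theorem~1(a) of Greene--Wu \cite{GW} and used as a black box in the proof of Theorem~\ref{th3}. So there is nothing in the paper to compare your sketch against; any proof you supply is already ``a different route'' in the sense that the authors simply cite the result.

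That said, your outline is broadly the Greene--Wu strategy (Busemann functions from the soul give a continuous convex exhaustion, then Riemannian convolution smooths it), but the strict-convexity step has a real gap. Adding a small cut-off multiple of $\rho^2$ supported near $p_0$ cannot make $f$ strictly convex \emph{globally}: outside the support of the cut-off you are back to the merely convex smoothed $F$, and your final sentence about ``propagating'' strict convexity outward via the smoothing is not an argument. In Greene--Wu the mechanism is different: under strictly positive sectional curvature the sublevel sets of the Busemann exhaustion are \emph{strictly} convex (their boundaries have positive-definite second fundamental form, by Toponogov), and this geometric strict convexity is what survives the smoothing and yields a Hessian that is positive definite everywhere, not just near the soul. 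If you want a self-contained proof, that is the point you need to address; a local perturbation at $p_0$ will not do it.
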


\begin{theorem}\label{th3}
	Let $M$ be a complete non-compact Riemannian manifold of positive sectional curvature and let $u:M\to\Bbb{R}$ be a convex function admitting a point $p_0\in M$ such that $u(p_0)=\inf_{M}u> -\infty$. Then, there exists a sequence $(x_k)$ such that $x_k$ is a soul of $M$, $x_k\to p$ and $u(p)=\inf_{M}u$.
\end{theorem}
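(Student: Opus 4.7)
The plan is a penalty/regularization argument: perturb $u$ by a vanishing multiple of Greene--Wu's strictly convex proper function from Lemma \ref{function}, show that each perturbation has a unique minimizer which is automatically a soul, and let the perturbation parameter go to zero.

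Concretely, let $f$ be the smooth Lipschitz strictly convex function with compact sublevel sets furnished by Lemma \ref{function}, set $c := \inf_M u = u(p_0)$, and for each $k \in \Bbb{N}$ define $g_k := u + \tfrac{1}{k} f$. The sum of a convex and a strictly convex function is strictly convex along every geodesic, so $g_k$ is strictly convex. Because $u \ge c$, any sublevel set $\{g_k \le \lambda\}$ sits inside $\{f \le k(\lambda - c)\}$, which is compact; combined with continuity of convex functions, this forces $g_k$ to attain its infimum, and strict convexity makes the minimizer unique. Call it $x_k \in M$.

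The crucial step, and the one I expect to require the most care, is checking that $\{x_k\}$ qualifies as a soul. A single point is tautologically a compact, totally convex, totally geodesic submanifold. Since $M$ has positive sectional curvature, the Gromoll--Meyer theorem recalled in Section \ref{Pre} shows $M$ is diffeomorphic to $\Bbb{R}^{n}$, hence to the normal bundle $T_{x_k} M$ of $\{x_k\}$. By the characterization quoted there, the remaining content is that $x_k$ must be simple, i.e.\ admit no geodesic loops. But a geodesic loop $\gamma : [0,T] \to M$ with $\gamma(0) = \gamma(T) = x_k$ would violate strict convexity of $g_k \circ \gamma$: the two endpoint values coincide with $\min g_k$, whereas strict convexity forces every interior value to be strictly less than $\min g_k$, which is impossible.

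To finish, the inequality $g_k(x_k) \le g_k(p_0)$ unfolds to
\[
u(x_k) + \tfrac{1}{k} f(x_k) \;\le\; c + \tfrac{1}{k} f(p_0),
\]
which, using $u(x_k) \ge c$, simultaneously gives $f(x_k) \le f(p_0)$ and $u(x_k) \le c + \tfrac{1}{k}(f(p_0) - f(x_k))$. The first bound places the sequence $(x_k)$ inside the compact set $\{f \le f(p_0)\}$, so a subsequence converges, $x_k \to p$. The second bound, together with the boundedness of $f(x_k)$ and the continuity of $u$, yields $u(p) = \lim_k u(x_k) = c = \inf_M u$, completing the argument.
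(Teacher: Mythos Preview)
Your argument is correct and is essentially the paper's own proof: the paper perturbs by $k\cdot u + (f-\inf_M f)$ whereas you use $u+\tfrac{1}{k}f$, but these share the same minimizers, and the rest (compact sublevel sets, unique minimizer, comparison at $p_0$, compactness to extract a limit) matches. One wording issue: a single point is \emph{not} ``tautologically'' totally convex---total convexity of $\{x_k\}$ is precisely the assertion that there are no nontrivial geodesic loops at $x_k$, which is exactly the simple-point property you then prove; so your strict-convexity loop argument is not an extra check but the actual verification of total convexity (the paper states this step more tersely, just citing strict convexity and \cite{CHE}).
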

\begin{proof} Let $f$ be the function whose existence is assured by the previous lemma. Since $f^{-1}(]-\infty,\lambda])$ is a compact subset of $M$, for every $\lambda\in\Bbb{R}$, we have that $\inf_Mf>-\infty$. Consider the function $g:M\to\Bbb{R}$ defined by $g:=f-\inf_Mf$, then $g\geq0$. We can assume, without loss of generality, that $u\geq0$. For $k\in\Bbb{N}$ fixed, consider the function $h_k:=k\cdot u+g$ and note that for $r\in\Bbb{R}$ we have
	\[
	x\in h_k^{-1}(]-\infty,r])\,\Leftrightarrow\,h_k(x)\leq r\,\Leftrightarrow\,ku(x)+g(x)\leq r.
	\]
	
	Then $x\in h_k^{-1}(]-\infty,r])\,\Rightarrow\,g(x)\leq r\,\Rightarrow\,f(x)\leq r+\inf_Mf$. Hence we conclude that $h_k^{-1}(]-\infty,r])\subset f^{-1}(]-\infty,r+\inf_Mf])$, since $f^{-1}(]-\infty,r+\inf_Mf])$ is compact it follows that $h_k^{-1}(]-\infty,r])$ is also. On the other hand, since $h_k$ is strictly convex there exists a unique point $x_k$ such that $h_k(x_k)=\inf_Mh_k$. Now, fixed $p_0$ such that $u(p_0)=\inf_Mu=0$ we get
	\begin{equation}\label{limxn}
	k\cdot u(x_k)+g(x_k)=h_k(x_k)\leq h_k(p_0)=g(p_0).
	\end{equation}
	
	From the above inequality it follows that $0\leq g(x_k)\leq g(p_0)$, so we get $x_k\in g^{-1}(]-\infty,g(p_0)])=f^{-1}(]-\infty,g(p_0)+\inf_Mf])$. As this is a compact set, we have $x_k\to p\in M$ or we can consider a subsequence of $(x_k)$ if necessary. Therefore, from (\ref{limxn}) we have
	\[
	0\leq u(x_k)\leq\frac{g(p_0)}{k}
	\]
	and thus we obtain $u(p)=\inf_Mu=0$. Since $h_k$ is a strictly convex function we have that the set $\{x_k\}$ is totally convex, from [Theorem 2,\cite{CHE}] it follows that every $x_k$ is a soul of $M$. \qed
\end{proof}

In optimization it is important to establish the convergence of numerical methods to find minimum points for convex functions and it is crucial the initial point in the iterative process. The next result illustrates the region where we should start the iterative process when the Riemannian manifold to be considered is a paraboloid.

\begin{corollary}\label{Corol1Th3}
	Let $u$ be a convex function defined on the paraboloid $P^2=\{(x,y,z):\,z=x^2+y^2\}$ such that the minimum set is non empty, then there exists a minimizer $p=(x_0,y_0,z_0)$ for $u$ such that $z_0 \leq \beta$, where $\beta = \sqrt{\frac{3}{4} \left(1+\mu_{1}^{2}\right)}$ and $\mu_{1} - \arctan \mu_{1} = \frac{\pi}{2}$.
\end{corollary}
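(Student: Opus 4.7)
The plan is to apply Theorem \ref{th3} to $u$ on the paraboloid $P^2$ and then pin down the region of simple points by an explicit Clairaut analysis. Since $P^2$ has positive Gaussian curvature $K=4(1+4r^2)^{-2}$ (where $r^2=x^2+y^2$), Theorem \ref{th3} supplies a sequence $(x_k)$ of souls converging to a minimizer $p$ of $u$. Because $K>0$ every soul is a single point, and the condition that $\{x_k\}$ be totally convex is equivalent to $x_k$ having no nontrivial geodesic loop, i.e., being a simple point. By Gromoll--Meyer the set of simple points is open, so $p$ lies in its closure, and by rotational symmetry this set is determined by the height $z$; the problem reduces to bounding the critical height at which geodesic loops first appear.

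To carry this out, I would work in cylindrical coordinates $(r,\theta)$ with $z=r^2$, so that the induced metric is $ds^2=(1+4r^2)\,dr^2+r^2\,d\theta^2$. Clairaut's relation $r^2\dot\theta=c$ together with unit speed gives $\dot r^{\,2}=(r^2-c^2)/[r^2(1+4r^2)]$, so a geodesic from a point of radius $r_0$ with $\dot r<0$ dips to minimum radius $c$ and returns to $r_0$ with total angular change
\[
\Delta\theta(c,r_0)=2c\int_c^{r_0}\frac{\sqrt{1+4r^2}}{r\sqrt{r^2-c^2}}\,dr,
\]
and a geodesic loop at a point of radius $r_0$ exists exactly when this equals $2\pi k$ for some $k\in\mathbb{Z}_{>0}$ and some $c\in(0,r_0)$. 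The crucial step is a closed-form evaluation of the integral; chaining the substitutions $r=c\sec\varphi$, $t=2c\tan\varphi$, $t=\sqrt{1+4c^2}\sinh u$, and then applying partial fractions, one obtains
\[
\Delta\theta(c,r_0)=2c\,\ln\frac{1+V}{1-V}+2\arctan\frac{V}{2c},\qquad V:=\frac{2\sqrt{r_0^2-c^2}}{\sqrt{1+4r_0^2}}.
\]

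The remainder is a two-variable optimization. Setting $\partial_c\Delta\theta=0$ at an interior maximum yields $\ln\frac{1+V}{1-V}=2/V$, and substituting this into the loop equation $\Delta\theta=2\pi$, together with $\mu:=2c/V$, reduces the condition to the transcendental equation $\mu-\arctan\mu=\pi/2$; hence $\mu=\mu_1$. Eliminating $c$ through the definition of $V$ gives the critical radius $(r^*)^2=V^2(1+\mu_1^2)/[4(1-V^2)]$. The main obstacle is then an effective upper bound on the implicit value of $V$: since $V\mapsto V\ln\frac{1+V}{1-V}$ is strictly increasing from $0$ to $+\infty$ on $(0,1)$ and equals $2$ at the critical $V$, I would verify directly the numerical inequality $(\sqrt{3}/2)\ln(7+4\sqrt{3})>2$ to conclude $V\le\sqrt{3}/2$; combined with the monotonicity of $V^2/(1-V^2)$ this yields $(r^*)^2\le 3(1+\mu_1^2)/4$, and taking the limit in the sequence of souls produces the asserted bound on $z_0$.
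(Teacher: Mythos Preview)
Your overall strategy matches the paper's at the structural level: apply Theorem~\ref{th3} to obtain a sequence of souls (i.e.\ simple points) converging to a minimizer, then bound the height of the simple-point region. The paper, however, carries out none of the Clairaut analysis; it simply invokes the 1941 result of Ling and Recht \cite{REV}, which characterizes the simple points of the paraboloid as exactly those with height below $\beta$, and then passes to the limit by continuity of the height function. So your route is genuinely different: you are attempting to re-derive the Ling--Recht threshold from scratch.

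That re-derivation has a gap at the final step. Your concluding inequality is $(r^{*})^{2}\le \tfrac{3}{4}(1+\mu_{1}^{2})$, and since $z=r^{2}$ on $P^{2}$ this says $z^{*}\le \tfrac{3}{4}(1+\mu_{1}^{2})=\beta^{2}$, not $z^{*}\le \beta$. Because $\beta>1$, the bound you actually obtain is strictly weaker than the one asserted in the corollary, so ``taking the limit in the sequence of souls'' does \emph{not} produce the stated bound on $z_{0}$. The estimate $V\le \sqrt{3}/2$ is too crude to recover the sharp constant, and in fact even the exact root of $V\ln\frac{1+V}{1-V}=2$ (numerically $V\approx 0.833$) inserted into your formula $(r^{*})^{2}=V^{2}(1+\mu_{1}^{2})/[4(1-V^{2})]$ yields $z^{*}\approx 5$, still far from $\beta\approx 2.57$. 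Hence at least one of your intermediate identities---the closed form for $\Delta\theta$, the first-order condition $\ln\frac{1+V}{1-V}=2/V$, or the elimination leading to $\mu=\mu_{1}$---must contain a slip, or else the quantity you are computing is not the Ling--Recht threshold; either way the argument as written does not establish the corollary.
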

\begin{proof}
	Let $h:\Bbb{R}^3\to\Bbb{R}$ be the projection on the third coordinate, i.e., $h(x,y,z)=z$. We recall that a point $p\in P^2$ is {\it simple} if there are no geodesic loops in $M$ closed at $p$. Following the terminology of Ling and Recht \cite{REV}, $p$ is a simple point (a Soul) is equivalent to saying that $p$ is not vertex for any loop. Moreover, in \cite{REV} the authors showed that $p$ is not vertex for any loop if, only if, $h(p) < \beta$.
	
	From Theorem \ref{th3}, there is a souls-sequence ($p_k$) of $P$ such that $p_k\to p$ where $u(p)=\min_Pu$. As a consequence of the result proved by Ling and Recht, we get $h(p_k) < \beta$. Since $h$ is a continuous function, we conclude that $h(p)\leq \beta$. \qed
\end{proof}

\section{Conclusions}\label{conclusion}

In Theorem \ref{th2} we have proved that the conservativity of the geodesic flow on a Riemannian manifold $M$ with infinite volume implies that all convex functions on $M$ are constant, this fact generalizes in a certain sense the result proved by Yau \cite{Y}. In the Theorem \ref{th1} we presented a geometric condition that ensures the existence of (strictly) convex functions on a particular class of complete non-compact Riemannian manifolds. Finally, in the Theorem \ref{th3} we have related the geometry of a Riemannian manifold with the set of minimum points of a convex function defined on the manifold.

\end{document}